\theoremstyle{plain}
\newtheorem{theorem}{Theorem}[section]
\newtheorem{corollary}[theorem]{Corollary}
\newtheorem{lemma}[theorem]{Lemma}
\newtheorem{proposition}[theorem]{Proposition}
\theoremstyle{definition}
\newtheorem{remark}[theorem]{Remark}
\newtheorem{example}[theorem]{Example}
\newtheorem{remarks-general-heckman-opdam}[theorem]{General Heckman-Opdam theory}
\newtheorem{remarks-general-compact-bc}[theorem]{The compact BC-case}
\numberwithin{equation}{section}
\title[Martingales associated with multivariate Jacobi processes]{Some martingales associated with
  multivariate Jacobi processes and Aomoto's Selberg integral}
\author{ Michael Voit}
\address{Fakult\"at Mathematik, Technische Universit\"at Dortmund,
          Vogelpothsweg 87,
          D-44221 Dortmund, Germany}
\email{michael.voit@math.tu-dortmund.de}
\subjclass[2010]{Primary 60F15; Secondary 60F05, 60J60, 60B20, 60H20, 70F10, 82C22, 33C67 }
\keywords{Interacting particle systems, Calogero-Moser-Sutherland models, 
zeros of Jacobi polynomials, Jacobi ensembles, circular ensembles.}
\begin{document}
\date{\today}

\begin{abstract} We study  $\beta$-Jacobi diffusion processes on  alcoves in
$\mathbb R^N$, depending on 3 parameters. 
Using elementary symmetric functions, we present  space-time-harmonic functions and 
martingales for these processes $(X_t)_{t\ge0}$ which are independent from one parameter.
This leads to a formula for $\mathbb E(\prod_{i=1}^N (y-X_{t,i}))$  in terms of classical 
Jacobi polynomials. For $t\to\infty$ this yields a corresponding formula  for Jacobi ensembles and thus
Aomoto's Selberg integral.
\end{abstract}

\maketitle

\section{Introduction}

For an integer $N$ and a constant $\beta\in [0,\infty[$ consider the $\beta$-Jacobi
 (or  $\beta$-MANOVA) ensembles which are $[0,1]^N $-valued random variables $X$ with Lebesgue densities
\begin{equation}\label{start-stationary}
f_{\beta,a_1,a_2}(x) := c_{\beta,a_1,a_2}
 \prod_{i=1}^N x_i^{a_1}(1-x_i)^{a_2} \cdot
 \prod_{1\le i<j\le N}|x_i-x_j|^{\beta}
\end{equation}
with  parameters $a_1,a_2> -1 $ and known normalizations $c_{\beta,a_1,a_2}>0$ which can be expressed 
via Selberg integrals; see the survey \cite{FW}. 
For
$\beta=1,2,4$, and suitable $a_1,a_2$, the variables $X$ appear as spectrum of the classical Jacobi ensembles;
see e.g.~\cite{F}. Moreover, for general $\beta,a_1,a_2$, the variables appear as  eigenvalues of 
the tridiagonal models in \cite{KN}, \cite{K}. Furthermore, for the ordered models on the alcoves
$A_0:=\{x\in\mathbb R^N: \> 0\le x_1\le \ldots\le x_N\le1\}$, the  probability measures with 
densities $N!\cdot f_{\beta,a_1,a_2}$ appear in log gas models as stationary distributions of  diffusions
$(X_t)_{t\ge0}$ with $N$ particles in $[0,1]$; see  \cite{F, Dem, RR1}.
These diffusions and their  stationary distributions  are closely related to Heckman-Opdam
 hypergeometric functions of type  $BC_N$. In particular, the generator  of the 
transition semigroup of  $(X_t)_{t\ge0}$
 is the symmetric part of a Dunkl-Cherednik Laplace operator. Moreover,
 Heckman-Opdam Jacobi polynomials form 
a basis of eigenfunctions, where these polynomials are
 orthogonal w.r.t.~the density  $N!\cdot f_{\beta,a_1,a_2}$ on $A_0$. For the  background see the monograph  \cite{HS} and 
\cite{Dem, RR1, L, BO}.
We point out that  for $\beta=1,2,4$ and suitable $a_1,a_2$, the  diffusion $(X_t)_{t\ge0}$
 and their stationary distributions on $A_0$ are projections of Brownian motions and 
uniform distributions on compact Grassmann manifolds over $\mathbb F=\mathbb R, \mathbb C, \mathbb H$; see
\cite{HS}, \cite{RR2}.

In this paper we use the elementary symmetric polynomials $e_0,e_1,\ldots, e_N$
in $N$ variables and construct poynomials $p_n$ of order $n=1,\ldots,N$ via linear combinations
 such that for 
 suitable exponents $r_n\ge0$, the processes $(e^{r_nt}p_n(X_t))_{t\ge0}$ are martingales where,
 after some  parameter transform,  $p_n$, $r_n$  depend 
only on 2 parameters and not on the third one; see Proposition 
\ref{elementary-symm-martingale} for details.
We  use this result to show that for particular  starting points,
\begin{equation}\label{start-char-pol}
\mathbb E\bigl(\prod_{i=1}^N (y- X_{t,i})\bigr) =  \tilde P_N^{(\alpha,\beta)}(y)\quad\quad\text{for all}\quad t\ge0
\end{equation}
where  $\tilde P_N^{(\alpha,\beta)}$ is a monic Jacobi polynomial on $[0,1]$ where 
 $\alpha,\beta$  depend on the 2 relevant parameters of the  martingale result  
(here, $\beta$ 
is not the $\beta$ in (\ref{start-stationary})). In the limit $t\to\infty$,  (\ref{start-char-pol})
 leads to a corresponding formula for the expectation for the random variable $X$ with density 
 in (\ref{start-stationary})  and to Aomoto's Selberg integral \cite{A}.
 Corresponding results for classical Hermite and Laguerre ensembles are given in \cite{DG, FG};
 for these results for related multivariate Bessel processes we refer to \cite{KVW}.
Clearly, (\ref{start-char-pol}) admits an interpretation for
 characteristic polynomials of classical Jacobi ensembles
and the tridiagonal $\beta$-Jacobi models in \cite{KN}.
 
The proof of  the  martingale result  
 relies on the stochastic differential equation for the diffusion  $(X_t)_{t\ge0}$;
 for the general background here we refer to \cite{P, RW}.

This paper is organized as follows. In Section 3 we briefly recapitulate  some facts on
 Heckman-Opdam Jacobi polynomials of type $BC$, the  associated 
Dunkl-Cherednik Laplace operator, and the transition semigroup of the diffusion
 $(X_t)_{t\ge0}$.
In Section 3 we then use stochastic analysis to derive our martingales.
 Section 4 is then devoted to  (\ref{start-char-pol}).
 In Section 5 we then discuss some connection between the martingale result and  Heckman-Opdam
Jacobi polynomials.

A  comment about  notations and normalizations:
 We  start in Section 2 with a brief survey on the compact  Heckman-Opdam theory
of type $BC$ with multiplicity parameters $k_1,k_2,k_3\ge0$. We 
 transfer all results from the trigonometric case to
the interval $[-1,1]$, and start with transformed parameters $\kappa,p,q$
in the SDE approach in Section 3.  We there follow
\cite{Dem} where we replace his  $\beta$  by $\kappa\ge0$, in order to avoid any confusion with the
 classical parameters $\alpha,\beta>-1$ of the  one-dimensional Jacobi polynomials $P_N^{(\alpha,\beta)}$.
The choice of the interval  $[-1,1]$ instead of $[0,1]$ as in (\ref{start-stationary}) or \cite{Dem}
is  caused by the fact that the results should fit to the  $P_N^{(\alpha,\beta)}$.

\section{ Heckman-Opdam Jacobi polynomials of type $BC$}

We first recapitulate some general facts on  Heckman-Opdam theory from  \cite{HS}.

\begin{remarks-general-heckman-opdam}
 Let  $(\frak a, \langle\,.\,,\,.\,\rangle) $ be a Euclidean space of dimension $N$. 
 Let $R$ be a crystallographic, possibly not reduced root system in $\frak a$
 with associated reflection group $W$. 
 Fix a positive subsystem $R_+$ of $R$ and a $W$-invariant multiplicity function $k:R\to[0,\infty[$. 
 The Cherednik operators associated with $R_+$ and $k$ are 
 \begin{equation}\label{def-dunkl-cher-op} 
D_\xi(k)f(x) 
= \partial_\xi f(x) + \sum_{\alpha \in R_+}  \frac{ k(\alpha) \langle \alpha, \xi\rangle }{1-e^{-\langle \alpha,x\rangle}}
 (f(x)-f(\sigma_\alpha(x)) -\langle \rho(k), \xi\rangle f(x) 
\end{equation} 
for $\xi \in \mathbb R^n$ with  the half-sum
 $ \rho(k) := \frac{1}{2} \sum_{\alpha \in R_+} k(\alpha) \alpha$ of positive roots.
 
  The $D_\xi(k)$ ($\xi \in \frak a$) commute, and for each $\lambda \in \frak a _\mathbb C$ there exists a unique analytic function $G(\lambda,k; .\,)$ on a common $W$-invariant tubular neighborhood of $\frak a$ in the complexification 
 $\frak a_{\mathbb C}$, the so called Opdam-Cherednik kernel,  satisfying
 \begin{equation}\label{Opdam-Cherednik}
  D_\xi(k)G(\lambda,k;\,.\,) = \langle \lambda, \xi \rangle\, G(\lambda,k; \,.\,) \>\> \forall \, \xi \in \frak a ; \,\,G(\lambda, k; 0) = 1.
\end{equation} 
The hypergeometric function associated with $R$ is  defined by
$$ F(\lambda, k;z) = \frac{1}{|W|} \sum_{w\in W} G(\lambda, k; w^{-1}z).$$
For the Heckman-Opdam polynomials, we write
$\alpha^\vee = \frac{2\alpha}{\langle \alpha, \alpha \rangle}$ for $\alpha \in R$ and
 use the weight lattice and the set of dominant weights associated with $R$ and $R_+$, 
$$ P = \{\lambda \in \frak a: \langle \lambda, \alpha ^\vee \rangle \in \mathbb Z \>\> \forall \alpha \in R\,\}, \quad
P_+ = \{\lambda \in P: \langle \lambda, \alpha^\vee\rangle \geq 0\,\, \forall \alpha \in R_+\,\}\supset R_+ $$ 
 where
 $P_+$ carries the usual dominance order. 
Let
$ \mathcal T:= \text{span}_{\mathbb C}\{e^{i\lambda}, \, \lambda \in P\}$
the vector space of trigonometric polynomials associated with $R$.
The orbit sums 
$$ M_\lambda = \sum_{\mu \in W\!\lambda} e^{i\mu}\,, \quad \lambda \in P_+$$
form a basis of the subspace $\mathcal T^W$ of $W$-invariant polynomials in $\mathcal T$.
For $ Q^\vee := \text{span}_{\mathbb Z}\{\alpha^\vee, \, \alpha \in R\}$,
 consider the  torus $\mathbb T = \frak a/2\pi Q^\vee$ with 
the weight function 
\begin{align}\label{weight-trig} \delta_k(t) := 
\prod_{\alpha \in R_+} \Bigl|\sin\Bigr(\frac{\langle \alpha, t\rangle}{2}\Bigr)\Bigr|^{2k_\alpha}.
\end{align}
The Heckman-Opdam polynomials associated with $R_+$ and $k$ are given by
$$ P_\lambda(k;z) := M_\lambda(z) + \sum_{\nu < \lambda} c_{\lambda\nu}(k) M_\nu(z) \quad (\lambda \in P_+\,, z\in \frak a_{\mathbb C})$$
where the  $c_{\lambda\nu}(k)\in \mathbb R$ are  determined  by the condition that $P_\lambda(k;\,.\,)$
 is orthogonal to $M_\nu$ in $L^2(\mathbb T, \delta_k)$ for $\nu \in P_+$ with $\nu<\lambda$.
 It is known that 
 $\{P_\lambda(k, \,.\,), \lambda\in P_+\,\}$ is an orthonormal basis of the space $L^2(\mathbb T, \delta_k)^W$
 of all $W$-invariant functions in $L^2(\mathbb T, \delta_k)$. 
By \cite{HS}, the normalized polynomials
$$ R_\lambda(k,z):= P_\lambda(k;z)/P_\lambda(k;0)$$
satisfy
\begin{equation}\label{relation-pol-hyper}
 R_\lambda( k,z) = F(\lambda+\rho(k),k;iz).
\end{equation}
We next introduce the Heckman-Opdam Laplacian
$$\Delta_k:= \sum_{j=1}^N D_{\xi_j}(k)^2 \>\> - \>\> \|\rho(k)\|_2^2$$
with  an orthonormal basis $\xi_1,\ldots,\xi_N$ of $\frak a$.
The operator $\Delta_k$ does not depend on the basis and, by \cite{Sch1, Sch2},
 has for a $W$-invariant function $f$ the form
$$\Delta_k f(x)= \Delta f(x)+\sum_{\alpha \in R_+} k(\alpha) \coth\Bigl( \frac{\langle \alpha, x\rangle}{2}\Bigr)\cdot
 \partial_\alpha f(x).$$
If we take the factor $i$ in (\ref{relation-pol-hyper}) into account as in \cite{RR1}, we now consider the operator
\begin{equation}\label{generator-general}
\tilde\Delta_kf(t):=\Delta f(t)+\sum_{\alpha \in R_+} k(\alpha) \cot\Bigl( \frac{\langle \alpha, t\rangle}{2}\Bigr)\cdot
 \partial_\alpha f(t).
\end{equation}
By construction, the $P_\lambda$ ($\lambda\in P_+$) 
are eigenfunctions of $\tilde\Delta_k$ with
with  eigenvalues $-\langle \lambda,\lambda+2\rho(k)\rangle\le0$.
This is used in \cite{RR1} to construct the transition densities of the diffusions with the  generators $L_k$.
\end{remarks-general-heckman-opdam}

\begin{remarks-general-compact-bc}
We now turn to the nonreduced root system
 $$ R= BC_N = \{\pm e_i, \pm 2 e_i,  \pm(e_i \pm e_j); \>\> 1\leq i < j \leq N\} \subset \mathbb R^N $$
 with  weight lattice  $P = \mathbb Z^n$ and  torus $\mathbb T= (\mathbb R/2\pi \mathbb Z)^N.$
 The mulitplicities on $R$ are written as  $k=(k_1,k_2, k_3)$ with $k_1, k_2, k_3$ as the values on the roots $e_i, 2e_i, e_i \pm e_j$.  
 Now fix a positive subsystem $R_+$ and consider
the associated normalized Heckman-Opdam Jacobi polynomials $R_\lambda= R_\lambda^{BC}$ ($\lambda\in \mathbb Z_+^N$)
as  e.g.~in \cite{BO, L, RR1}.
 (\ref{weight-trig}) and   some calculation  show that the  polynomials $\widetilde R_\lambda$ with
 $$ \widetilde R_\lambda(\cos t):= R_\lambda(k;t) \quad\quad(\lambda \in \mathbb Z_+^N)$$
  form an  orthogonal basis of
$L^2(A_N, w_k)$ on the alcove
$$A_N:=\{x\in\mathbb R^N| \> -1\leq x_1\leq ...\leq x_N\leq 1\}$$
with the weight function 
\begin{equation}\label{weight-general} w_k(x)
 := \prod_{i=1}^N (1-x_i)^{k_1+k_2-1/2}(1+x_i)^{k_2-1/2} \cdot \prod_{i<j}|x_i-x_j|^{2k_3}.
\end{equation}
The  operator $\tilde\Delta_k$ from (\ref{generator-general}) is given by
\begin{align}\label{generator-trigonometric}
\tilde\Delta_k f(t):=\Delta f(t)+\sum_{i=1}^N &\Biggl(k_1 \cot(\frac{t_i}{2})+2k_2 \cot(t_i)\\
&+k_3\sum_{j: j\ne i}\Bigl(\cot(\frac{t_i-t_j}{2})+\cot(\frac{t_i+t_j}{2})\Bigr)\Biggr)\partial_{i}f(t).
\notag\end{align}
The substitutions $x_i=\cos t_i$  and 
elementary calculations lead to the  operator  
\begin{equation}\label{generator-algebraic}
 L_kf(x):= \sum_{i=1}^N (1-x_i^2)f_{x_i,x_i}(x)+ \sum_{i=1}^N\Biggl( -k_1-(1+k_1+2k_2)x_i
+2k_3\sum_{j: j\ne i} \frac{1-x_i^2}{x_i-x_j}\Biggr)f_{x_i}(x).
\end{equation}
In summary, the Heckman-Opdam Jacobi 
polynomials $ \widetilde R_\lambda$ are eigenfunctions
of $ L_k$ with  eigenvalues $-\langle \lambda,\lambda+2\rho(k)\rangle$ where 
$\rho(k)$ has the  coordinates 
\begin{equation}\label{rho-component}
\rho(k)_i=\bigl(k_1+2k_2+2k_3(N-i)\bigr)/2 \quad\quad (i=1,\ldots,N).
\end{equation}
By \cite{RR1}, $ L_k$ 
is the generator of a Feller semigroup with transition operators whose smooth densities admit
series expansions involving the $ \widetilde R_\lambda$. Moreover, by standard stochastic calculus, the associated Feller 
processes $(X_t)_{t\ge0}$ with coordinates $X_{t,i}$ should be solutions of the SDEs
\begin{equation}\label{SDE-alcove-k}
dX_{t,i} = \sqrt{2(1-X_{t,i}^2)}\> dB_{t,i} + \Bigl( -k_1-(1+k_1+2k_2)X_{t,i} +2k_3\sum_{j: j\ne i} \frac{1-X_{t,i}^2}{X_{t,i}-X_{t,j}}\Bigr)dt.
\end{equation}
for $i=1,\ldots,N$ and an $N$-dimensional Brownian motion $(B_t)_{t\ge0}$. 
In fact, it is shown in Theorem 2.1 of \cite{Dem} that for any starting point $x$ in the interior of $A_N$ and all
$k_1,k_2,k_3>0$, the SDE (\ref{SDE-alcove-k}) has a unique strong solution $(X_t)_{t\ge0}$ 
where the paths are reflected when they meet the boundary $\partial A_N$ of $A_N$.
 We  study these Jacobi processes in the next section.
\end{remarks-general-compact-bc}

\begin{example}
For $N=1$, the   $ \widetilde R_\lambda$ are
 one-dimensional Jacobi polynomials
\begin{align}\label{jacobi-pol-def}
P_n^{(\alpha,\beta)}(x)&:= \binom{n+\alpha}{n}  \>_2F_1\bigl(-n, n+\alpha + \beta+1; \alpha +1;(1-x)/2\bigr) \\
&= \sum_{k=0}^{n} \binom{n}{k} 
\frac{(n+\alpha+\beta+1)_k(\alpha+k+1)_{n-k}}{n!} \Bigl(\frac{x-1}{2}\Bigr)^k\notag
\end{align}
 for  $\alpha,\beta>-1$, where the $P_n^{(\alpha,\beta)}$  are orthogonal 
 w.r.t.~the weights $(1-x)^\alpha(1+x)^\beta$   on $]-1,1[$; see Ch.~4 of  \cite{S}.
With these notations we see from (\ref{weight-general}) that
 $$\binom{n+\alpha}{n} \tilde R_n^{BC_1}(k;.) =  P_n^{(\alpha, \beta)} \quad  \text{ with } \quad
\alpha = k_1+k_2 -\frac{1}{2}, \, \beta = k_2-\frac{1}{2}.$$
Moreover, (\ref{generator-algebraic}) corresponds with the classical differential equation for the $P_n^{(\alpha,\beta)}$.
\end{example}

\section{Some martingales related to Jacobi processes}

In this section we study the $\beta$-Jacobi processes $(X_t)_{t\ge0}$ on $A_N$ which satisfy (\ref{SDE-alcove-k}). 
We  follow \cite{Dem} and introduce new parameters $p,q,\kappa>0$ instead $k_1,k_2,k_3$ where we replace the parameter
$\beta$ in  \cite{Dem} by $\kappa$ in order to avoid problems with the  classical Jacobi polynomials below.
For  $\kappa>0$ and 
$p,q>N-1+1/\kappa$, we now
 define  the  Jacobi process $(X_t)_{t\ge0}$  as the unique strong  solution 
of the SDEs
\begin{align}\label{SDE-alcove}
dX_{t,i} &= \sqrt{2(1-X_{t,i}^2)}\> dB_{t,i} +\kappa\Bigl((p-q) -(p+q)X_{t,i}\\
& \quad\quad\quad\quad\quad\quad \quad\quad\quad\quad\quad +
\sum_{j: \> j\ne i}\frac{(1+X_{t,i})(1-X_{t,j})+(1+X_{t,j})(1-X_{t,i})}{X_{t,i}-X_{t,j}}\Bigr)dt \notag\\
 &= \sqrt{2(1-X_{t,i}^2)}\> dB_{t,i} +\kappa\Bigl((p-q) -(p+q)X_{t,i} +
2\sum_{j: \> j\ne i}\frac{1-X_{t,i}X_{t,j}}{X_{t,i}-X_{t,j}}\Bigr)dt
\notag\\
 &=\sqrt{2(1-X_{t,i}^2)}\> dB_{t,i} +\kappa\Bigl((p-q) +(2(N-1)-(p+q))X_{t,i}\notag\\
& \quad\quad\quad\quad\quad\quad \quad\quad\quad\quad\quad +
2\sum_{j: \>j\ne i}\frac{1-X_{t,i}^2}{X_{t,i}-X_{t,j}}\Bigr)dt
\notag
\end{align}
for $i=1,\ldots,N$
with an $N$-dimensional Brownian motion $(B_t)_{t\ge0}$
where the paths of  $(X_t)_{t\ge0}$ are reflected on $\partial A_N$
and where we start in some point in the interior of $A_N$; see 
Theorem 2.1 of \cite{Dem}.
Clearly the SDEs (\ref{SDE-alcove}) and (\ref{SDE-alcove-k}) are equal for
\begin{equation}\label{parameter-change-k-p}
\kappa=k_3, \quad q= N-1+\frac{1+2k_1+2k_2}{2k_3}, \quad p= N-1+\frac{1+2k_2}{2k_3}.
\end{equation}
 It is  known
 by \cite{Dem, Dou} that  for
 $\kappa\ge 1$ and 
$p,q\ge N-1+2/\kappa$,
the process does not meet  $\partial C_N^A$  almost surely.

Besides the original processes for $\kappa>0$ we also consider the transformed processes 
$(\tilde X_{t}:=X_{t/\kappa})_{t\ge0}$.
We use the obvious formulas
$$ \int_0^t Z_{s/\kappa} \> ds=\kappa \int_0^{t/\kappa } Z_{s} \> ds \quad\text{and}\quad
 \int_0^t Z_{s/\kappa} \> d\tilde B_{s}=\sqrt\kappa \int_0^{t/\kappa } Z_{s} \>  dB_s$$
with  Brownian motions  $(B_t)_{t\ge0}, \>(\tilde B_t)_{t\ge0} $ starting in $0$ related by
 $\tilde B_t= \sqrt\kappa \cdot B_{t/\kappa}$. We then obtain the renormalized  SDEs
\begin{equation}\label{SDE-alcove-normalized}
d\tilde X_{t,i} = 
\frac{\sqrt 2}{\sqrt\kappa } \sqrt{1-\tilde X_{t,i}^2}\> d\tilde B_{t,i} +\Bigl((p-q) -(p+q)\tilde X_{t,i} +
2\sum_{j\ne i}\frac{1-\tilde X_{t,i}\tilde X_{t,j}}{\tilde X_{t,i}-\tilde X_{t,j}}\Bigr)dt
\end{equation}
for $i=1,\ldots,N$. The generator of the diffusion semigroup associated with $(\tilde X_{t})_{t\ge0}$
is the operator $\tilde L_k:=\frac{1}{\kappa} L_k$.

 We now derive some results for symmetric polynomials of 
$(\tilde X_{t})_{t\ge0}$ and $(\tilde X_{t})_{t\ge0}$. 
For this we recapitulate that the elementary
symmetric polynomials $e_n^{m}$  in $m$ variables for $n=0,\ldots,m$  are characterized by
\begin{equation}\label{symmetric-poly}
\prod_{j=1}^m (z-x_j) = \sum_{j=0}^{m}(-1)^{m-j}  e_{m-j}^{m}(x) z^j \quad\quad (z\in\mathbb C, \> x=(x_1,\ldots,x_m)).
\end{equation}
In particular,
$e_0^{m}=1, \> e_1^{m}(x)=\sum_{j=1}^m x_j ,\ldots, e_m^{m}(x)=\prod_{j=1}^m x_j$. 

We need a further notation: For  a non-empty set
 $S\subset \{1,\ldots,N\}$, let $\tilde X_{t}^S$ be the $\mathbb R^{|S|}$-valued
 variable with the 
coordinates $\tilde  X_{t,i}$ for $i\in S$ in the natural ordering on $S\subset \{1,\ldots,N\}$. 
 We need the following technical observation:

\begin{lemma}\label{symmetric-pol-in-t}
 For all $r\in\mathbb R$, $n=0,1,\ldots,N$, $\kappa\ge 1$ and $p,q$ with
$p,q\ge N-1+2/\kappa$,
\begin{align}d(e^{rt}&\cdot e_n^N(\tilde X_{t}))=
\frac{\sqrt 2 \cdot e^{rt}}{\sqrt\kappa}\sum_{j=1}^N\sqrt{1-\tilde X_{t,j}^2}\cdot 
e_{n-1}^{N-1}(\tilde X_{t}^{\{1,\ldots,N\}\setminus\{j\}}) d\tilde B_{t,j} 
\notag\\
&+e^{rt}\Biggl((r-n(p+q-n+1)) e_n^N(\tilde X_{t})
+ (p-q)(N-n+1)e_{n-1}^N(\tilde X_t) 
\notag\\
&\quad\quad\quad\quad -(N-n+2)(N-n+1)
e_{n-2}^N(\tilde X_{t})\Biggr)dt
\notag\end{align}
where, for $n=0,1$, we assume that $e_{-2}\equiv e_{-1}\equiv 0$.
\end{lemma}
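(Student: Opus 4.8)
The plan is to apply It\^o's formula to the function $e^{rt}\cdot e_n^N(x)$ along the solution of the renormalized SDE~(\ref{SDE-alcove-normalized}), since $e_n^N$ is a smooth (polynomial) function on $\mathbb R^N$. First I would record the partial derivatives of $e_n^N$ that are needed. The key combinatorial identities are $\partial_{x_i} e_n^N(x) = e_{n-1}^{N-1}(x^{\{1,\ldots,N\}\setminus\{i\}})$ (the derivative in the $i$-th variable drops that variable and lowers the degree), and for the second derivatives $\partial_{x_i}\partial_{x_j} e_n^N(x) = e_{n-2}^{N-2}(x^{\{1,\ldots,N\}\setminus\{i,j\}})$ for $i\ne j$, while $\partial_{x_i}^2 e_n^N \equiv 0$ since $e_n^N$ is affine in each single variable. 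These follow directly from the defining relation~(\ref{symmetric-poly}) by differentiating the product $\prod_j(z-x_j)$. With these in hand, It\^o's formula produces three contributions: the explicit $t$-derivative $r\,e^{rt}e_n^N$, the first-order (drift and martingale) terms weighted by $\partial_{x_i}e_n^N$, and the second-order quadratic-variation term weighted by $\tfrac12\partial_{x_i}\partial_{x_j}e_n^N\,d\langle \tilde X_i,\tilde X_j\rangle$.

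The martingale part is immediate: the Brownian increment $\tfrac{\sqrt2}{\sqrt\kappa}\sqrt{1-\tilde X_{t,i}^2}\,d\tilde B_{t,i}$ multiplied by $\partial_{x_i}e_n^N = e_{n-1}^{N-1}(\tilde X_t^{\{1,\ldots,N\}\setminus\{i\}})$ and summed over $i$ reproduces exactly the stated stochastic integral. For the finite-variation part, the crucial point is that the cross-variations vanish, $d\langle \tilde X_i,\tilde X_j\rangle = 0$ for $i\ne j$ because the driving Brownian motions are independent, so only the diagonal $d\langle \tilde X_i\rangle_t = \tfrac{2}{\kappa}(1-\tilde X_{t,i}^2)\,dt$ survives; but this is paired with $\partial_{x_i}^2 e_n^N \equiv 0$. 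Hence the It\^o correction term contributes nothing, and the entire drift comes from the first-order term $\sum_i \big((p-q)-(p+q)\tilde X_{t,i}+2\sum_{j\ne i}\tfrac{1-\tilde X_{t,i}\tilde X_{t,j}}{\tilde X_{t,i}-\tilde X_{t,j}}\big)\,e_{n-1}^{N-1}(\tilde X_t^{\{1,\ldots,N\}\setminus\{i\}})$.

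I expect the main obstacle to be the algebraic reduction of this drift sum into the three clean terms $(r-n(p+q-n+1))e_n^N + (p-q)(N-n+1)e_{n-1}^N - (N-n+2)(N-n+1)e_{n-2}^N$. I would handle the three pieces separately. The constant piece $(p-q)\sum_i e_{n-1}^{N-1}(\tilde X^{\setminus i})$ is treated by the identity $\sum_{i=1}^N e_{n-1}^{N-1}(x^{\setminus i}) = (N-n+1)\,e_{n-1}^N(x)$, which counts how often each monomial of degree $n-1$ arises when one index is omitted. The linear piece $-(p+q)\sum_i \tilde X_{t,i}\,e_{n-1}^{N-1}(\tilde X^{\setminus i})$ is simplified using $x_i\,e_{n-1}^{N-1}(x^{\setminus i}) = e_n^N(x) - e_n^{N-1}(x^{\setminus i})$ together with $\sum_i e_n^{N-1}(x^{\setminus i}) = (N-n)e_n^N(x)$, giving $-(p+q)\,n\,e_n^N$ after collecting, which supplies the $-n(p+q)$ in the coefficient of $e_n^N$ and combines with $r$. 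The genuinely delicate part is the interaction sum $2\sum_i\sum_{j\ne i}\tfrac{1-\tilde X_{t,i}\tilde X_{t,j}}{\tilde X_{t,i}-\tilde X_{t,j}}\,e_{n-1}^{N-1}(\tilde X^{\setminus i})$: here I would symmetrize each pair $(i,j)$, rewrite the $i$-term minus $j$-term over the common denominator $\tilde X_{t,i}-\tilde X_{t,j}$, and exploit that $e_{n-1}^{N-1}(\tilde X^{\setminus i})-e_{n-1}^{N-1}(\tilde X^{\setminus j})$ is divisible by $(\tilde X_{t,j}-\tilde X_{t,i})$ so that all poles cancel, leaving a polynomial expression. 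Carefully tracking the numerator $1-\tilde X_{t,i}\tilde X_{t,j}$ and re-expressing the resulting symmetric polynomials in terms of $e_n^N$, $e_{n-1}^N$ and $e_{n-2}^N$ should produce the remaining $n(n-1)$ contribution to the $e_n^N$-coefficient and the full $-(N-n+2)(N-n+1)e_{n-2}^N$ term; matching these constants is the computational heart of the lemma.
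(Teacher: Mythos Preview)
Your proposal is correct and follows essentially the same route as the paper's proof: apply It\^o's formula (noting, as you do, that $\partial_{x_i}^2 e_n^N\equiv 0$ kills the quadratic-variation term), then reduce the drift via the combinatorial identities $\sum_i e_{n-1}^{N-1}(x^{\setminus i})=(N-n+1)e_{n-1}^N$, $\sum_i x_i\,e_{n-1}^{N-1}(x^{\setminus i})=n\,e_n^N$, and the symmetrization of the interaction term using $e_{n-1}^{N-1}(x^{\setminus j})-e_{n-1}^{N-1}(x^{\setminus i})=(x_i-x_j)e_{n-2}^{N-2}(x^{\setminus\{i,j\}})$ together with $\sum_{i\ne j}e_{n-2}^{N-2}(x^{\setminus\{i,j\}})=(N-n+2)(N-n+1)e_{n-2}^N$ and $\sum_{i\ne j}x_ix_j\,e_{n-2}^{N-2}(x^{\setminus\{i,j\}})=n(n-1)e_n^N$. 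The only point you do not mention explicitly, which the paper states at the outset, is that the parameter restrictions $\kappa\ge1$, $p,q\ge N-1+2/\kappa$ guarantee that the process does not meet $\partial A_N$, so It\^o's formula applies without any reflecting local-time terms.
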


\begin{proof} We are in the situation where the process does not meet the boundary.
 Ito's formula and the SDE (\ref{SDE-alcove-normalized}) show that
$$d(e^{rt}\cdot e_n^N(\tilde X_{t}))= 
r\cdot e^{rt}\cdot e_n^N(\tilde X_{t}) \> dt+
 e^{rt}\sum_{j=1}^N  e_{n-1}^{N-1}(\tilde X_{t}^{\{1,\ldots,N\}\setminus\{j\}})\> d\tilde X_{t,j}.$$
Therefore, by the second line of the  SDE  (\ref{SDE-alcove-normalized}), and with
$$dM_t:= \frac{\sqrt 2\cdot e^{rt}}{\sqrt\kappa}\sum_{j=1}^N\sqrt{1-\tilde X_{t,j}^2}\cdot
 e_{n-1}^{N-1}(\tilde X_{t}^{\{1,\ldots,N\}\setminus\{j\}})\> d\tilde B_{t,j},$$ 
\begin{align}\label{elementary-symm-1}
d(&e^{rt}\cdot e_n^N(\tilde X_{t}))= 
r\cdot e^{rt}  e_n^N(\tilde X_{t}) \> dt+ dM_t\\
&\quad+ e^{rt}\Biggl((p-q)\sum_{j=1}^N e_{n-1}^{N-1}(\tilde X_{t}^{\{1,\ldots,N\}\setminus\{j\}})
-(p+q)\sum_{j=1}^N  e_{n-1}^{N-1}(\tilde X_{t}^{\{1,\ldots,N\}\setminus\{j\}})\cdot \tilde X_{t,j}
\notag\\
&\quad\quad\quad\quad\quad+ 2\sum_{i,j;\>  i\ne j}\frac{1-\tilde X_{t,j}\tilde X_{t,i}}{\tilde X_{t,j}-\tilde X_{t,i}}
e_{n-1}^{N-1}(\tilde X_{t}^{\{1,\ldots,N\}\setminus\{j\}})
\Biggr)dt.\notag
\end{align}
Simple combinatorial computations for $i\neq j$ (cf. (2.10), (2.11) in \cite{VW}) yield
\begin{align}\label{elementary-symm-2}
&\sum_{j=1}^N e_{n-1}^{N-1}(\tilde X_{t}^{\{1,\ldots,N\}\setminus\{j\}})=(N-n+1)\cdot  e_{n-1}^{N}(\tilde X_{t}),\\
&\sum_{j=1}^N e_{n-1}^{N-1}(\tilde X_{t}^{\{1,\ldots,N\}\setminus\{j\}})\tilde X_{t,j}=n\cdot  e_{n}^{N}(\tilde X_{t})\notag
\end{align}
as well as
\begin{equation}\label{elementary-symm-2a}
  e_{n-1}^{N-1}(\tilde X_{t}^{\{1,\ldots,N\}\setminus\{j\}})- e_{n-1}^{N-1}(\tilde X_{t}^{\{1,\ldots,N\}\setminus\{i\}})=
 (\tilde X_{t,i}-\tilde X_{t,j})e_{n-2}^{N-2}(\tilde X_{t}^{\{1,\ldots,N\}\setminus\{i,j\}})
\end{equation}
and 
\begin{equation}\label{elementary-symm-3}
 \sum_{i,j=1,\ldots,N; i\ne j}e_{n-2}^{N-2}(\tilde X_{t}^{\{1,\ldots,N\}\setminus\{i,j\}})= 
(N-n+2)(N-n+1)e_{n-2}^N(\tilde X_{t}).
\end{equation}
 (\ref{elementary-symm-1})-(\ref{elementary-symm-3}) now imply
\begin{align}
d(e^{rt}&\cdot e_n^N(\tilde X_{t}))=  dM_t + e^{rt}\Biggl( r\cdot e_n^N(\tilde X_{t}) + 
(p-q)(N-n+1)  e_{n-1}^N(\tilde X_{t})\notag\\
& - n(p+q) e_n^N(\tilde X_{t})
- (N-n+2)(N-n+1)e_{n-2}^N(\tilde X_{t})+ n(n-1) e_n^N(\tilde X_{t})\Biggr)dt.
\notag
\end{align}
This leads to the lemma for $n\ge2$. 
An inspection of the proof shows that all formulas are also valid for $n=0,1$ with the convention of the lemma.
\end{proof}

 Lemma \ref{symmetric-pol-in-t}  leads to the following martingales w.r.t.~the canonical
 filtration of the Brownian motion $(\tilde B_t)_{t\ge0}$.

\begin{proposition}\label{elementary-symm-martingale}
Let  $n\in\{1,\ldots,N\}$, $\kappa>0$ and
$p,q>N-1+1/\kappa$. Put
$$r_n:=n(p+q-n+1).$$
Then there exist coefficients $c_{n,l}\in\mathbb R$ for $l=0,\ldots, n-1$
such that
for all starting points $x_0$ in the interior of $C_N^A$ and the Jacobi process $(\tilde X_{t})_{t\ge0}$
with parameters $\kappa,p,q$,
 the process
\begin{equation}\label{mart-formel}
\Biggl( e^{r_nt}\cdot
\Biggl( e_n^N(\tilde X_{t}) + \sum_{l=1}^{n} c_{n,l} \cdot e_{n-l}^N(\tilde X_{t})\Biggr)\Biggr)_{t\ge0}
\end{equation}
is a martingale. The $c_{n,l}$ and $r_n$ do not
depend  on $\kappa$.
\end{proposition}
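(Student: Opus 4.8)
The plan is to build the martingale by exploiting the triangular structure of the drift coefficients in Lemma \ref{symmetric-pol-in-t}. Observe that with the choice $r_n := n(p+q-n+1)$, the coefficient of $e_n^N(\tilde X_t)$ in the $dt$-part of $d(e^{r_nt}\cdot e_n^N(\tilde X_t))$ vanishes, since $r_n - n(p+q-n+1)=0$. What remains in the drift is a linear combination of the strictly lower-order polynomials $e_{n-1}^N(\tilde X_t)$ and $e_{n-2}^N(\tilde X_t)$, namely
\begin{equation}\label{leftover-drift}
e^{r_nt}\Bigl((p-q)(N-n+1)\,e_{n-1}^N(\tilde X_t) - (N-n+2)(N-n+1)\,e_{n-2}^N(\tilde X_t)\Bigr)\,dt.
\end{equation}
The idea is then to cancel this leftover drift by adding correction terms $c_{n,l}\,e_{n-l}^N(\tilde X_t)$ for $l\ge 1$, so that the full expression in (\ref{mart-formel}) has vanishing $dt$-part and is therefore a local martingale.

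The key mechanism is that the exponential prefactor $e^{r_nt}$ is the same for every summand. First I would apply Lemma \ref{symmetric-pol-in-t} to each term $c_{n,l}\,e_{n-l}^N(\tilde X_t)$ with the same exponent $r=r_n$; this produces a drift contribution in which $e_{n-l}^N$ appears with coefficient $r_n - (n-l)(p+q-(n-l)+1)$, together with lower terms $e_{n-l-1}^N$ and $e_{n-l-2}^N$. Crucially, since $r_n\ne (n-l)(p+q-(n-l)+1)$ for $l\ge1$ (as the function $m\mapsto m(p+q-m+1)$ is strictly decreasing for $m\le (p+q+1)/2$, which holds here because $p,q>N-1$), the self-coefficient of $e_{n-l}^N$ is nonzero. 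This nonvanishing is exactly what lets me solve for the $c_{n,l}$ recursively: I collect, for each index $m=n-1, n-2, \ldots, 0$, the total coefficient of $e_m^N(\tilde X_t)$ in the combined drift, set it to zero, and solve for $c_{n,n-m}$ in terms of the already-determined higher coefficients. This is a finite triangular linear system that is uniquely solvable from the top down.

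Once the $c_{n,l}$ are chosen this way, the entire $dt$-part of $d\bigl(e^{r_nt}(e_n^N + \sum_l c_{n,l}e_{n-l}^N)\bigr)$ vanishes identically, leaving only the stochastic integral $dM_t$ terms. Hence the process in (\ref{mart-formel}) is a local martingale. To upgrade local to genuine martingale I would note that $\tilde X_t$ lives in the compact alcove $A_N$, so each $e_m^N(\tilde X_t)$ and each factor $\sqrt{1-\tilde X_{t,j}^2}$ is uniformly bounded; on any finite time interval the prefactor $e^{r_nt}$ is bounded as well, so the integrand of the martingale part is bounded and the stochastic integral is a true (square-integrable) martingale. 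For the claim that the $c_{n,l}$ and $r_n$ are independent of $\kappa$, the essential point is that in the renormalized SDE (\ref{SDE-alcove-normalized}) the parameter $\kappa$ enters only through the diffusion coefficient $\sqrt{2/\kappa}$ and not the drift; consequently $\kappa$ appears only in the martingale part $dM_t$ of Lemma \ref{symmetric-pol-in-t} and never in the drift coefficients that determine the recursion, so both $r_n$ and the solution $c_{n,l}$ of the triangular system are manifestly $\kappa$-free.

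The step I expect to be the main obstacle is verifying the strict nonvanishing of the self-coefficients $r_n-(n-l)(p+q-(n-l)+1)$ for all $l=1,\ldots,n-1$ under the stated hypothesis $p,q>N-1+1/\kappa$, since this monotonicity is precisely what guarantees the triangular system is solvable and the $c_{n,l}$ are well-defined; a careless parameter range could make an intermediate coefficient vanish and break the recursion. The rest is bookkeeping: tracking how the three-term drift recursion of Lemma \ref{symmetric-pol-in-t} shifts indices and assembling the coefficient of each $e_m^N$ correctly.
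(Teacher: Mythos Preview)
Your strategy---use Lemma~\ref{symmetric-pol-in-t} with $r=r_n$, then recursively choose the $c_{n,l}$ so that the drift vanishes level by level---is exactly the mechanism the paper uses, and your boundedness argument for upgrading the local martingale to a true martingale is the same. The recursion you describe coincides with the paper's explicit formulas for $c_{n,1}$ and $c_{n,l}$ (their denominators are precisely your self-coefficients $r_n-r_{n-l}=l(p+q-2n+l+1)$), and your observation that $\kappa$ sits only in the diffusion coefficient is the right reason for $\kappa$-independence.

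There is, however, a genuine gap in the parameter range. Lemma~\ref{symmetric-pol-in-t} is stated and proved only for $\kappa\ge 1$ and $p,q\ge N-1+2/\kappa$, the regime in which the process almost surely does not touch $\partial A_N$ and It\^o's formula applies without boundary terms. The Proposition, by contrast, asserts the martingale property for all $\kappa>0$ and $p,q>N-1+1/\kappa$, where the paths may be reflected at the boundary and the clean It\^o computation behind Lemma~\ref{symmetric-pol-in-t} is no longer justified. You invoke the lemma as if it covered the full range, which it does not. The paper closes this gap by a two-step argument: first, in the restricted range, Dynkin's formula converts the martingale property into the PDE statement that $f_{N,n}(x,t)=e^{r_nt}\bigl(e_n^N(x)+\sum_l c_{n,l}e_{n-l}^N(x)\bigr)$ is space-time-harmonic for $\tilde L_k$; second, since this PDE identity is analytic in $(p,q,\kappa)$, analytic continuation extends it to the full range, and Dynkin's formula then gives the martingale back. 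Your proposal is missing this extension step.

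A minor correction: the map $m\mapsto m(p+q-m+1)$ is strictly \emph{increasing}, not decreasing, on $[0,(p+q+1)/2]$. Your conclusion that $r_n\ne r_{n-l}$ for $l\ge1$ is nonetheless correct; the direct computation $r_n-r_{n-l}=l(p+q-2n+l+1)$ together with $p+q>2(N-1)+2/\kappa\ge 2n-2$ shows this quantity is strictly positive.
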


\begin{proof} 
We first assume that $\kappa\ge 1$ and $p,q\ge N-1+2/\kappa$ as in Lemma \ref{symmetric-pol-in-t}.
  Lemma \ref{symmetric-pol-in-t} here yields that for $l=0,\ldots, n$, 
the processes
$ (e^{r_nt}\cdot e_{n-l}^N(\tilde X_{t}))_{t\ge0}$ are linear combinations of the processes 
$I_{n-l-j}:=(\int_0^t e^{r_ns}\cdot e_{n-l-j}^N(\tilde X_{s})\> ds)_{t\ge0}$ for $j=0,1,2$
 up to the addition of some integrals w.r.t.~$(\tilde B_t)_{t\ge0}$. As the 
integrands of these  Brownian integrals are bounded, these Brownian integrals are obviously
martingales.

Let us now consider all  linear combinations of the processes 
$I_{n-l-j}$ for $l\ge0$ and  $j=0,1,2$.
The definition of $r_n$ and Lemma \ref{symmetric-pol-in-t} ensure that the summand $I_n$ does not appear.
Moreover, if we put
\begin{equation}\label{recurrence-1a}
c_{n,1}:= \frac{(p-q)(N-n+1)}{r_n-r_{n-1}}= \frac{(p-q)(N-n+1)}{p+q-2n+2},
\end{equation}
we see that the summand  $I_{n-1}$ also does not appear. If we now define
\begin{align}\label{recurrence-1b}
c_{n,l}:=& \frac{(p-q)(N-n+l)c_{n,l-1}-(N-n+l)(N-n+l+1)c_{n,l-2}}{r_n-r_{n-l}}\\
=& \frac{(p-q)(N-n+l)c_{n,l-1}-(N-n+l)(N-n+l+1)c_{n,l-2}}{l(p+q-2n+l+1)}
\notag
\end{align}
for $l=2,\ldots,n$ with $c_{n,0}=1$ in an recursive way, we obtain from Lemma \ref{symmetric-pol-in-t} 
 that the summands  $I_{n-l}$ also do not appear.
In summary, we conclude that the proposition holds for $\kappa\ge 1$ and $p > q\ge N-1+2/\kappa$.

We now use Dynkin's formula (see e.g. Section III.10 of \cite{RW}) which implies that
 the symmetric functions
 $$ f_{N,n}:A_N\times [0,\infty[\to \mathbb R, \quad (x,t)\mapsto
 e^{r_nt}\cdot
\Bigl( e_n^{N}(x) + \sum_{l=1}^{n} c_{n,l} \cdot e_{n-l}^{N}(x)\Bigr) $$
are space-time-harmonic w.r.t. the generators of the renormalized 
Jacobi processes $(\tilde X_t)_{t\ge0}$   for $\kappa\ge 1$ and $p > q\ge N-1+2/\kappa$,
i.e., 
\begin{equation}\label{diff-op-space-time}
(\frac{\partial}{\partial t}+ \tilde L_k)f_{N,n}\equiv 0
\end{equation}
 for the parameters related via
(\ref{parameter-change-k-p}). As the left hand side of (\ref{diff-op-space-time}) is analytic in $p,q,\kappa$,
  analytic continuation shows that $f_{N,n}$ is  space-time-harmonic
 also  for all  $\kappa>0$ and 
$p>q>N-1+1/\kappa$ with the corresponding coefficients  $c_{n,l}= c_{n,l}(p,q,N)$
via (\ref{recurrence-1a}) and (\ref{recurrence-1b}). Dynkin's formula  now yields the
 proposition in general.
\end{proof}

The independence of $r_n$ and $c_{n,l}$ from $\kappa$ in  Proposition \ref{elementary-symm-martingale} is not surprising
by the  space-time-harmonicity argument. In fact, $\kappa$ appears in the differential operator
  in (\ref{diff-op-space-time}) only as a constant factor in the classical Laplace operator $\Delta$.
As $\Delta e_{j}^{N}\equiv 0$ for all $j$, the independence of  $\kappa$
is obvious.

\begin{remark}
The recurrence formulas  (\ref{recurrence-1a}) and (\ref{recurrence-1b}) for the $c_{n,l}$
 can be simplified slightly; we however do not have 
 a closed formula for  $\tilde c_{n,l}$ except for  $p=q$.

In the case  $p=q$ we have $ c_{n,l}=0$ for $l$ odd,  and
\begin{equation}\label{c-k-2l}
 c_{n,2l}=\frac{(-1)^l (N-n+2)_{2l}}{ l!\cdot 2^l \cdot (p+q-2n+3)(p+q-2n+5)\cdots(p+q-2n+2l+1)}
\end{equation}
for $l=1,\ldots,\lfloor n/2\rfloor$. This follows easily from (\ref{recurrence-1a}) and  (\ref{recurrence-1b}). 
\end{remark}

We  return to Lemma \ref{symmetric-pol-in-t} and  Proposition \ref{elementary-symm-martingale}.
An inspection of the proofs shows that both results are also valid for  $\kappa=\infty$
 in which case the
 SDE (\ref{SDE-alcove-normalized}) is an ODE, and the process $(\tilde X_{t})_{t\ge0}$ is deterministic
 whenever so is the initial condition for $t=0$. There are several limit theorems 
(laws of large numbers, CLTs) for the limit transition $\kappa\to\infty$; see \cite{HV}. 
In particular,  Proposition \ref{elementary-symm-martingale}
for $\kappa\in]0,\infty]$ leads to:

\begin{corollary}\label{constant-expectation-general}
For any  starting point  $x_0$ in the interior of $A_N$,
 let $(\tilde X_{t})_{t\ge0}$ be the 
associated normalized Jacobi process 
with   $\kappa\in]0,\infty]$ and $p,q>N-1+1/\kappa$.
Then there are constants $a_{n,l}\in\mathbb R$ for $0\le l\le n\le N$ such that
$$\mathbb E(e_n^N(\tilde X_{t}) )=\sum_{l=0}^n a_{n,l} e^{-r_lt}$$
with $r_0=0$ where the coefficients $a_{n,l}$ and the exponents $r_l$ 
 do not depend on $\kappa$.
\end{corollary}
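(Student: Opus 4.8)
The plan is to obtain the statement essentially for free from the martingale in Proposition~\ref{elementary-symm-martingale}, and then to strip off the elementary symmetric functions one degree at a time by an induction on $n$.

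First I would fix $n\in\{1,\ldots,N\}$ and take expectations in the martingale~(\ref{mart-formel}). Since the process starts at the deterministic point $x_0$ and each $e_{n-l}^N(\tilde X_t)$ is bounded (because $\tilde X_t\in A_N\subset[-1,1]^N$), the martingale property gives that the expectation of~(\ref{mart-formel}) is constant in $t$ and equal to its value at $t=0$, namely
$$ C_n:= e_n^N(x_0)+\sum_{l=1}^{n} c_{n,l}\, e_{n-l}^N(x_0), $$
which depends only on $x_0$ and on the coefficients $c_{n,l}$. Dividing by $e^{r_nt}$ then yields the identity
$$ \mathbb E\bigl(e_n^N(\tilde X_t)\bigr)+\sum_{l=1}^{n} c_{n,l}\,\mathbb E\bigl(e_{n-l}^N(\tilde X_t)\bigr)=C_n\, e^{-r_nt}. $$

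Next I would induct on $n$. For $n=0$ we have $e_0^N\equiv1$, so $\mathbb E(e_0^N(\tilde X_t))=1=e^{-r_0t}$ with $r_0=0$, which is already of the required form. For the inductive step I would solve the displayed identity for $\mathbb E(e_n^N(\tilde X_t))$: every term $\mathbb E(e_{n-l}^N(\tilde X_t))$ with $1\le l\le n$ has strictly smaller degree $n-l<n$, so by the induction hypothesis it equals a linear combination of the exponentials $e^{-r_jt}$ with $0\le j\le n-l\le n-1$. Together with the single new exponential $C_n e^{-r_nt}$ this expresses $\mathbb E(e_n^N(\tilde X_t))$ as $\sum_{l=0}^{n} a_{n,l}\, e^{-r_lt}$, which is the asserted form, and fixes the $a_{n,l}$ recursively. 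The $\kappa$-independence is then automatic: the exponents $r_l=l(p+q-l+1)$ and the coefficients $c_{n,l}$ are $\kappa$-independent by Proposition~\ref{elementary-symm-martingale}, and $C_n$ depends only on the starting point, so the $a_{n,l}$ produced by the recursion inherit this independence. The case $\kappa=\infty$ is covered by the remark preceding the corollary, which ensures that Proposition~\ref{elementary-symm-martingale}, and hence the constancy of the expectation of~(\ref{mart-formel}), survives when the SDE degenerates into an ODE.

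I do not expect a genuine analytic obstacle here, since the entire substance of the result is already contained in Proposition~\ref{elementary-symm-martingale}. The only points that deserve a word of care are, first, the passage from the martingale property to $\mathbb E(M_t)=M_0$, which is justified by the boundedness of the $e_n^N$ on the compact alcove $A_N$ together with the deterministic initial condition; and second, the bookkeeping of the exponents, where one must check that only $r_0,\ldots,r_n$ (and no higher $r_j$) enter $\mathbb E(e_n^N(\tilde X_t))$, which is exactly what the degree drop $n-l<n$ guarantees.
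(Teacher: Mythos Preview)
Your proposal is correct and follows essentially the same approach as the paper: both take expectations in the martingale of Proposition~\ref{elementary-symm-martingale} to obtain the identity $\mathbb E(e_n^N(\tilde X_t))=C_n e^{-r_nt}-\sum_{l=1}^{n} c_{n,l}\,\mathbb E(e_{n-l}^N(\tilde X_t))$, and then conclude by induction on $n$, using that $r_l$ and $c_{n,l}$ are $\kappa$-independent. You are somewhat more explicit about the boundedness justification and the $\kappa=\infty$ case, but the argument is the same.
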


\begin{proof} 
 Proposition \ref{elementary-symm-martingale} shows that
$$\mathbb E(e_n^N(\tilde X_{t}) )=e^{-r_nt}\Bigl(e_n^N(x_0)+ \sum_{l=0}^{n-1} c_{n,l}  e_l^N(x_0)\Bigr)-
\sum_{l=0}^{n-1} c_{n,l}  E(e_l^N(\tilde X_{t}) ).$$
As $a_{n,l}:= e_n^N(x_0)+ \sum_{l=0}^{n-1} c_{n,l}  e_l^N(x_0)$ is independent of $\kappa$, the corollary follows by
 induction.
\end{proof}

\section{Results for a special starting point}

We now choose  special starting points. 
For this we use the ordered zeros of special Jacobi polynomials $P_N^{(\alpha,\beta)}$ as introduced in Section 2.
We  need the following characterization of the ordered zeros 
 $z_1\le \ldots\le z_N$ of  $P_N^{(\alpha,\beta)}$ due to Stieltjes, which is presented 
in \cite{S} as Theorem 6.7.1:

\begin{lemma} Let $(x_1,\ldots,x_N)\in A_N$. Then $(x_1,\ldots,x_N)=(z_1,\ldots,z_N)=:z$ if and only if
for all  $j=1,...,N$,
\begin{equation}\label{eq-zeros}
\sum_{i=1,\ldots,N,i\neq j}\frac{1}{x_j-x_i}+\frac{\alpha+1}{2}\frac{1}{x_j-1}+\frac{\beta+1}{2}\frac{1}{x_j+1}=0.
\end{equation}
\end{lemma}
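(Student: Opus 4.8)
The plan is to prove the two implications separately; the backward one will rest on a strict-concavity (hence uniqueness) argument. Throughout I use the classical facts from \cite{S} that for $\alpha,\beta>-1$ the polynomial $P_N^{(\alpha,\beta)}$ has exactly $N$ simple zeros, all lying in the open interval $]-1,1[$, and that $y=P_N^{(\alpha,\beta)}$ solves the Jacobi differential equation
\begin{equation}\label{jac-ode}
(1-x^2)y''+\bigl(\beta-\alpha-(\alpha+\beta+2)x\bigr)y'+N(N+\alpha+\beta+1)y=0.
\end{equation}
I first observe that if two of the $x_i$ coincide, or if $x_1=-1$ or $x_N=1$, then some denominator in (\ref{eq-zeros}) vanishes; hence (\ref{eq-zeros}) can only hold in the interior of $A_N$, and it suffices to work on the open convex set $U:=\{x:\,-1<x_1<\dots<x_N<1\}$.

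For the implication $x=z\Rightarrow(\ref{eq-zeros})$ I would evaluate (\ref{jac-ode}) at a zero $z_j$. Since $y(z_j)=0$ and, by simplicity, $y'(z_j)\ne0$, the constant term drops out and the relation reduces to $y''(z_j)/y'(z_j)=-(\beta-\alpha-(\alpha+\beta+2)z_j)/(1-z_j^2)$. On the other hand, writing $y=c\prod_{i=1}^N(x-z_i)$ and differentiating twice, at $x=z_j$ only the terms from which the factor $(x-z_j)$ has been removed survive, giving $y'(z_j)=c\prod_{i\ne j}(z_j-z_i)$ and $y''(z_j)=2c\sum_{l\ne j}\prod_{i\ne j,l}(z_j-z_i)$, whence $y''(z_j)/y'(z_j)=2\sum_{i\ne j}(z_j-z_i)^{-1}$. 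Equating the two expressions and resolving the right-hand side of the ODE relation into partial fractions with respect to the poles $z_j=\pm1$ (one checks that $\frac{\alpha+1}{1-z}-\frac{\beta+1}{1+z}$ has numerator $(\alpha+\beta+2)z+\alpha-\beta$) reproduces, after division by $2$, exactly equation (\ref{eq-zeros}).

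For the converse I would show that (\ref{eq-zeros}) has at most one solution in $U$. Consider the function
$$\Phi(x):=2\sum_{i<j}\log(x_j-x_i)+(\alpha+1)\sum_{i=1}^N\log(1-x_i)+(\beta+1)\sum_{i=1}^N\log(1+x_i)$$
on $U$. A direct differentiation shows that $\partial\Phi/\partial x_j$ equals twice the left-hand side of (\ref{eq-zeros}), so the solutions of (\ref{eq-zeros}) in $U$ are precisely the critical points of $\Phi$. The decisive point is that $\Phi$ is \emph{strictly} concave on $U$: each pair term contributes the negative-semidefinite rank-one form $-\frac{2}{(x_i-x_j)^2}(v_i-v_j)^2$ to $v^{\mathsf T}(\mathrm{Hess}\,\Phi)v$, while the terms $(\alpha+1)\log(1-x_i)$ and $(\beta+1)\log(1+x_i)$, having positive coefficients because $\alpha,\beta>-1$, add the strictly negative diagonal entries $-(\alpha+1)(1-x_i)^{-2}-(\beta+1)(1+x_i)^{-2}$. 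Hence $\mathrm{Hess}\,\Phi$ is negative definite on the convex set $U$, and a strictly concave function has at most one critical point. Since the first part exhibits $z=(z_1,\dots,z_N)\in U$ as one such solution, every $x\in A_N$ satisfying (\ref{eq-zeros}) must equal $z$, which completes the equivalence.

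I expect the two genuine points of care to be the bookkeeping in the forward direction — the double differentiation of the factored form and the partial-fraction matching with the ODE coefficients, which is elementary but error-prone — and, in the converse, the verification that $\mathrm{Hess}\,\Phi$ is negative \emph{definite} rather than merely semidefinite. Here it is essential that $\alpha+1$ and $\beta+1$ are strictly positive, so that the endpoint charges pin down all $N$ coordinates and rule out the degeneracy that the pure interaction term alone would leave.
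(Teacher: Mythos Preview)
Your argument is correct. The paper itself does not prove this lemma; it merely records it as Stieltjes' classical result and refers to Theorem~6.7.1 in Szeg\H{o}~\cite{S}. Your forward implication is exactly the standard derivation found there. For the converse, however, you take a genuinely different route from Szeg\H{o}: his proof assumes (\ref{eq-zeros}) for some interior point, sets $f(t)=\prod_i(t-x_i)$, observes that the degree-$N$ polynomial $(1-t^2)f''(t)+(\beta-\alpha-(\alpha+\beta+2)t)f'(t)$ vanishes at every $x_j$ (by the same computation as in the forward step), concludes it equals $c\,f$ for some constant, and then identifies $c$ and hence $f$ with $P_N^{(\alpha,\beta)}$ via the Jacobi ODE. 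Your strict-concavity argument instead exhibits (\ref{eq-zeros}) as the stationarity condition of the log-potential $\Phi$ and gets uniqueness from the negative-definite Hessian; this makes the electrostatic picture and the role of the hypothesis $\alpha,\beta>-1$ (needed for strict concavity) more transparent, while Szeg\H{o}'s approach avoids any second-order computation and ties the characterization more directly to the differential equation.
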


We now return to the normalized Jacobi processes $(\tilde X_t)_{t\ge0}$ with parameters $p,q,\kappa$. 
We write the drift parts in the SDEs (\ref{SDE-alcove-normalized}) as
\begin{align}\label{drift-part}
(p&-q) -(p+q)\tilde X_{t,i} +
2\sum_{j: \>j\ne i}\frac{1-\tilde X_{t,i}\tilde X_{t,j}}{\tilde X_{t,i}-\tilde X_{t,j}}\notag\\
&= (p-q)+ (2(N-1)-(p+q))\tilde X_{t,i}+2(1-\tilde X_{t,i}^2)\sum_{j: \> j\ne i}\frac{1}{\tilde X_{t,i}-\tilde X_{t,j}}
\notag\\
&=2(1-\tilde X_{t,i}^2)\cdot\Bigl( \frac{p-(N-1)}{2} \frac{1}{\tilde X_{t,i}+1}+
 \frac{q-(N-1)}{2} \frac{1}{\tilde X_{t,i}-1}+ \sum_{j: \> j\ne i}\frac{1}{\tilde X_{t,i}-\tilde X_{t,j}}\Bigr)
\end{align}
for $i=1,\ldots,N$. 
We now compare (\ref{drift-part}) with (\ref{eq-zeros}) and obtain:

\begin{corollary}\label{constant-solution}
Let $p,q>N-1$, and put
\begin{equation}\label{alpha-beta}
\alpha:= q-N>-1, \quad \beta:= p-N>-1.
\end{equation}
Then, for $\kappa=\infty$, the SDE (\ref{SDE-alcove-normalized}) is an ODE which has the vector $z\in A_N$
of the preceding lemma as unique constant solution.
\end{corollary}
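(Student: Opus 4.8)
The plan is to observe first that setting $\kappa=\infty$ removes the diffusion term from (\ref{SDE-alcove-normalized}): the martingale part carries the prefactor $\sqrt2/\sqrt\kappa$, which tends to $0$, so the equation degenerates into the deterministic system $\frac{d}{dt}\tilde X_{t,i}=b_i(\tilde X_t)$, where $b_i$ denotes the drift coefficient of (\ref{SDE-alcove-normalized}). A constant path $\tilde X_t\equiv x$ is then a solution precisely when $b_i(x)=0$ for every $i=1,\ldots,N$, so the entire statement reduces to locating the zeros of the drift vector field inside $A_N$.

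Next I would use the factorized expression for the drift already recorded in (\ref{drift-part}), namely
$$b_i(x)=2(1-x_i^2)\Bigl(\frac{p-(N-1)}{2}\frac{1}{x_i+1}+\frac{q-(N-1)}{2}\frac{1}{x_i-1}+\sum_{j:\,j\ne i}\frac{1}{x_i-x_j}\Bigr).$$
Since the drift field is defined only where the coordinates are pairwise distinct, any constant solution must lie in the interior of $A_N$, where $-1<x_i<1$ and hence $1-x_i^2>0$. Consequently $b_i(x)=0$ is equivalent to the vanishing of the bracketed factor for each $i$.

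Finally I would match this bracket against Stieltjes' equations (\ref{eq-zeros}). Under the substitution (\ref{alpha-beta}) one has $\frac{\alpha+1}{2}=\frac{q-(N-1)}{2}$ and $\frac{\beta+1}{2}=\frac{p-(N-1)}{2}$, so after relabelling the summation index the bracket coincides termwise with the left-hand side of (\ref{eq-zeros}). The preceding lemma, being an if-and-only-if characterization for points of $A_N$, then shows that this system holds exactly for $x=z$. This simultaneously yields existence (the zero vector $z$ is a constant solution, and it sits in the interior because the zeros of $P_N^{(\alpha,\beta)}$ are simple and contained in $]-1,1[$) and uniqueness among constant solutions in $A_N$.

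The argument is essentially a dictionary between the drift and Stieltjes' electrostatic equilibrium conditions, so I do not expect a serious obstacle; the only points demanding care are the justification that one may divide by $1-x_i^2$ (which rests on confinement to the interior of $A_N$, equivalently on the simplicity of the Jacobi zeros) and the precise bookkeeping of the parameter identification $\alpha=q-N$, $\beta=p-N$.
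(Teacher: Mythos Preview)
Your proposal is correct and follows essentially the same route as the paper: the paper simply records the factorization (\ref{drift-part}) of the drift and then states the corollary with the remark ``compare (\ref{drift-part}) with (\ref{eq-zeros}),'' which is exactly the dictionary you spell out. Your write-up is in fact more careful than the paper's, since you make explicit why a constant solution forces the bracketed factor (rather than the prefactor $1-x_i^2$) to vanish; the only small imprecision is that the pairwise-distinctness of the coordinates alone does not by itself exclude $x_1=-1$ or $x_N=1$, but this is easily handled (e.g.\ by evaluating the unfactored drift there) and the paper does not address it either.
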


We now combine this with Corollary \ref{constant-expectation-general} and obtain:

\begin{corollary}\label{constant-solution-2}
Let $\kappa\in]0,\infty]$, $p,q>N-1$, and take the vector $z\in A_N$
of the preceding lemma as starting point. Let   $(\tilde X_t)_{t\ge0}$  be the associated normalized 
 Jacobi process. Then, for all $n=0,1,\ldots,N$ and $t\ge0$, 
$$\mathbb E(e_n^N(\tilde X_t))=e_n^N(z).$$
In particular, this expectation does not depend on $t$ and $\kappa$.
\end{corollary}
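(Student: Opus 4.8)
The plan is to reduce everything to the degenerate case $\kappa=\infty$ and then transport the conclusion to finite $\kappa$ by exploiting the $\kappa$-independence already built into Corollary \ref{constant-expectation-general}. So the real content of the two preceding corollaries does all the work, and what remains is to glue them together correctly.

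First I would treat $\kappa=\infty$. Here the renormalized SDE (\ref{SDE-alcove-normalized}) loses its diffusion term (the coefficient $\sqrt2/\sqrt\kappa$ vanishes) and becomes a deterministic ODE, and by Corollary \ref{constant-solution} the chosen starting point $z\in A_N$ is its unique constant solution once we set $\alpha=q-N$ and $\beta=p-N$. Hence the process $(\tilde X_t)_{t\ge0}$ started at $z$ satisfies $\tilde X_t\equiv z$ for all $t$, so that $\mathbb E(e_n^N(\tilde X_t))=e_n^N(z)$ for every $n$ and every $t\ge0$ in this case.

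Next I would invoke Corollary \ref{constant-expectation-general}, which gives the representation $\mathbb E(e_n^N(\tilde X_t))=\sum_{l=0}^n a_{n,l}e^{-r_lt}$ in which both the coefficients $a_{n,l}$ and the exponents $r_l$ are independent of $\kappa$, depending only on $z$ (that is, on $p,q,N$). Thus the function $t\mapsto\sum_{l=0}^n a_{n,l}e^{-r_lt}$ is literally the same for all $\kappa\in\,]0,\infty]$. Evaluating at $\kappa=\infty$ and comparing with the previous step shows that this function is the constant $e_n^N(z)$; consequently $\mathbb E(e_n^N(\tilde X_t))=e_n^N(z)$ for all $\kappa$ and all $t$, which is the assertion. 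If one prefers to make this explicit, note that $r_l-r_{l-1}=p+q-2l+2>0$ for $l\le N$, so the $r_l$ are pairwise distinct and the functions $e^{-r_0t},\ldots,e^{-r_nt}$ are linearly independent on $[0,\infty[$; the $\kappa=\infty$ identity then forces $a_{n,0}=e_n^N(z)$ and $a_{n,l}=0$ for $l\ge1$, and these $\kappa$-free values yield the claim for every $\kappa$.

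The argument is short precisely because the analytic-continuation and martingale machinery has already been absorbed into Corollaries \ref{constant-expectation-general} and \ref{constant-solution}; the one point requiring care is that the coefficients $a_{n,l}$ occurring for finite $\kappa$ must genuinely coincide with those read off at $\kappa=\infty$. This is exactly what the stated $\kappa$-independence of the $a_{n,l}$ and $r_l$ guarantees, together with the fact that the starting point $z$ is held fixed across all values of $\kappa$ via the identifications $\alpha=q-N$, $\beta=p-N$. Hence no separate verification beyond matching these parameters is needed.
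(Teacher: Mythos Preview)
Your argument is correct and follows exactly the paper's approach: first settle the case $\kappa=\infty$ via Corollary \ref{constant-solution}, then transport the identity to all $\kappa$ using the $\kappa$-independence of $\mathbb E(e_n^N(\tilde X_t))$ from Corollary \ref{constant-expectation-general}. Your additional remark on the linear independence of the exponentials $e^{-r_lt}$ is a nice elaboration but not needed, since the $\kappa$-independence already acts on the full function $t\mapsto\mathbb E(e_n^N(\tilde X_t))$.
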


\begin{proof} This is clear for $\kappa=\infty$ by Corollary \ref{constant-solution}. As 
$\mathbb E(e_n^N(\tilde X_t))$ is independent of  $\kappa\in]0,\infty]$ by Corollary
 \ref{constant-expectation-general},
the result is clear.
\end{proof}

We immediately obtain:

\begin{corollary}\label{constant-solution-3}
Let $\kappa\in]0,\infty[$, $p,q>N-1$, and  $z\in A_N$ as above. Then for the  associated Jacobi process 
 $( X_t)_{t\ge0}$ starting in $z$, and all  $n=0,1,\ldots,N$ and $t\ge0$,
 $\mathbb E(e_n^N( X_t))=e_n^N(z).$
\end{corollary}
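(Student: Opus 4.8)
The plan is to deduce Corollary \ref{constant-solution-3} directly from Corollary \ref{constant-solution-2}, since the statements are almost identical. Corollary \ref{constant-solution-2} asserts that for the normalized process $(\tilde X_t)_{t\ge0}$ with $\kappa\in\,]0,\infty]$, $p,q>N-1$, and starting point $z$, one has $\mathbb E(e_n^N(\tilde X_t))=e_n^N(z)$ for all $n$ and $t\ge0$. The present corollary restricts to $\kappa\in\,]0,\infty[$ and asks the same for the \emph{original} (unnormalized) process $(X_t)_{t\ge0}$. The only gap to bridge is the time-rescaling relating the two processes.

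First I would recall the definition $\tilde X_t:=X_{t/\kappa}$ from the construction of the renormalized SDE (\ref{SDE-alcove-normalized}). Equivalently, $X_t=\tilde X_{\kappa t}$, so the original process is a deterministic time-change of the normalized one. Since Corollary \ref{constant-solution-2} gives $\mathbb E(e_n^N(\tilde X_s))=e_n^N(z)$ for \emph{every} $s\ge0$, specializing to $s=\kappa t$ yields
\begin{equation}
\mathbb E(e_n^N(X_t))=\mathbb E(e_n^N(\tilde X_{\kappa t}))=e_n^N(z)
\notag
\end{equation}
for all $t\ge0$ and all $n=0,1,\dots,N$. Note that $\kappa<\infty$ is exactly what makes the time-change $s=\kappa t$ meaningful (for $\kappa=\infty$ the original process is not defined as an SDE, which is why the corollary excludes it), and the hypothesis $p,q>N-1$ guarantees that the point $z$ of the preceding lemma—the ordered zeros of $P_N^{(\alpha,\beta)}$ with $\alpha=q-N$, $\beta=p-N>-1$—is a legitimate starting point in $A_N$.

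There is no genuine obstacle here: the statement is an immediate corollary, as the word ``immediately'' preceding it signals. The only point requiring any care is ensuring that the time-change identity $X_t=\tilde X_{\kappa t}$ is applied with the correct orientation, and that one quotes Corollary \ref{constant-solution-2} at the argument $\kappa t$ rather than at $t$. Thus the proof is a single line invoking the relation between $(X_t)_{t\ge0}$ and $(\tilde X_t)_{t\ge0}$ together with Corollary \ref{constant-solution-2}.
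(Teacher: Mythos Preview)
Your proposal is correct and matches the paper's approach exactly: the paper gives no explicit proof beyond the phrase ``We immediately obtain'', and the one-line argument you supply---applying Corollary~\ref{constant-solution-2} at the rescaled time $s=\kappa t$ via the relation $X_t=\tilde X_{\kappa t}$---is precisely the intended justification.
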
 

As an application, we get the following result:

\begin{theorem}\label{det-formula} 
Let $\kappa\in]0,\infty[$, $p,q>N-1$, and $\alpha,\beta$ as well as  $z\in A_N$ as above. Let 
 $( X_t)_{t\ge0}$  be  the  associated Jacobi process starting in $z$. 
Then, for all $t\ge0$,
\begin{equation}\label{det-form-1}
\mathbb E\bigl(\prod_{i=1}^N (y- X_{t,i})\bigr) = \frac{1}{l_N^{(\alpha,\beta)}} \cdot P_N^{(\alpha,\beta)}(y)
  \quad\quad\text{for}\quad y\in \mathbb R
\end{equation}
with the leading coefficient $l_N^{(\alpha,\beta)}$ of $ P_N^{(\alpha,\beta)}$.
Moreover, for $n=0,1,\ldots, N$,
\begin{align}\label{det-form-2}
& \mathbb E\bigl(e_{n}^N(X_{t})\bigr)= e_{n}^N(z)\\
&=\frac{2^{N}}{\binom{2N+\alpha+\beta}{N}}
\sum_{l=N-n}^N  (-1)^{N-l} \binom{N}{l} \binom{l}{N-n} 
\frac{(N+\alpha+\beta+1)_l(\alpha+l+1)_{N-l}}{N!2^l}.\notag
\end{align}
\end{theorem}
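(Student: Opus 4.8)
The plan is to read off both statements from Corollary \ref{constant-solution-3} together with the defining identity \eqref{symmetric-poly} for elementary symmetric polynomials. First I would expand the characteristic polynomial of $X_t$ via \eqref{symmetric-poly},
$$\prod_{i=1}^N(y-X_{t,i})=\sum_{j=0}^N(-1)^{N-j}e_{N-j}^N(X_t)\,y^j,$$
take expectations, and pull $\mathbb E$ inside by linearity. By Corollary \ref{constant-solution-3} each coefficient $\mathbb E(e_{N-j}^N(X_t))$ equals the deterministic constant $e_{N-j}^N(z)$, independent of $t$ and $\kappa$, so that
$$\mathbb E\Bigl(\prod_{i=1}^N(y-X_{t,i})\Bigr)=\sum_{j=0}^N(-1)^{N-j}e_{N-j}^N(z)\,y^j=\prod_{i=1}^N(y-z_i),$$
again by \eqref{symmetric-poly}. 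In particular the left-hand side is independent of $t$.

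The second step is to identify this monic polynomial with a Jacobi polynomial. By Corollary \ref{constant-solution} and the Stieltjes characterization \eqref{eq-zeros}, the starting vector $z=(z_1,\dots,z_N)$ consists precisely of the ordered zeros of $P_N^{(\alpha,\beta)}$ with $\alpha=q-N$ and $\beta=p-N$. Since $P_N^{(\alpha,\beta)}$ has degree $N$ with leading coefficient $l_N^{(\alpha,\beta)}$, it factors as $P_N^{(\alpha,\beta)}(y)=l_N^{(\alpha,\beta)}\prod_{i=1}^N(y-z_i)$, whence $\prod_{i=1}^N(y-z_i)=P_N^{(\alpha,\beta)}(y)/l_N^{(\alpha,\beta)}$. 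Combining with the previous display gives \eqref{det-form-1}.

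For \eqref{det-form-2} the first equality $\mathbb E(e_n^N(X_t))=e_n^N(z)$ is exactly Corollary \ref{constant-solution-3}. To obtain the closed form I would compare the coefficient of $y^{N-n}$ on both sides of \eqref{det-form-1}: on the left it equals $(-1)^n e_n^N(z)$ by \eqref{symmetric-poly}, while on the right I would insert the explicit expansion \eqref{jacobi-pol-def} of $P_N^{(\alpha,\beta)}$, expand each power $\bigl((y-1)/2\bigr)^k$ by the binomial theorem, and collect the terms contributing to $y^{N-n}$ (those with $k\ge N-n$). The leading coefficient is read off from the top term $k=N$ as $l_N^{(\alpha,\beta)}=(N+\alpha+\beta+1)_N/(2^N N!)$, which equals $2^{-N}\binom{2N+\alpha+\beta}{N}$ via $(N+\alpha+\beta+1)_N=(2N+\alpha+\beta)!/(N+\alpha+\beta)!$. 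Dividing by $l_N^{(\alpha,\beta)}$ and simplifying the signs $(-1)^n(-1)^{k-(N-n)}=(-1)^{N-k}$ then produces the stated sum.

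The conceptual content is immediate once Corollary \ref{constant-solution-3} is in hand; the only genuine work lies in the last paragraph, namely the coefficient extraction from \eqref{jacobi-pol-def}. The anticipated obstacle there is purely bookkeeping: correctly tracking the three sources of signs (the $(-1)^{N-j}$ of \eqref{symmetric-poly}, the $(-1)^{k-m}$ from the binomial expansion of $\bigl((y-1)/2\bigr)^k$, and the overall $(-1)^n$), and checking that the two forms of the leading coefficient agree so that the prefactor $2^N/\binom{2N+\alpha+\beta}{N}$ emerges exactly as written.
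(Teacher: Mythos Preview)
Your proposal is correct and follows essentially the same route as the paper: expand the product via \eqref{symmetric-poly}, replace each $\mathbb E(e_n^N(X_t))$ by $e_n^N(z)$ using Corollary~\ref{constant-solution-3}, identify $\prod_i(y-z_i)$ with the monic Jacobi polynomial, and then extract coefficients from \eqref{jacobi-pol-def} together with $l_N^{(\alpha,\beta)}=2^{-N}\binom{2N+\alpha+\beta}{N}$. The paper's proof is identical in structure, only citing Szeg\H{o} for the leading coefficient rather than deriving it from the top term as you do.
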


\begin{proof} Corollary \ref{constant-solution-3} shows that
\begin{align}\label{det-computation}
\mathbb E\bigl(\prod_{i=1}^N (y- X_{t,i})\bigr)&=
\sum_{n=0}^N (-1)^n\mathbb E\bigl(e_n^N( X_{t})\bigr)\cdot y^{N-n}
=\sum_{n=0}^N (-1)^n e_n^N(z)\cdot y^{N-n}\\
&=\prod_{i=1}^N (y- z_i)= \frac{1}{l_N^{(\alpha,\beta)}}  \cdot P_N^{(\alpha,\beta)}(y).
\notag\end{align}
This proves the first statement. 
We now use
$l_N^{(\alpha,\beta)}=2^{-N}\binom{2N+\alpha+\beta}{N}$ (see (4.21.6) of \cite{S}) and compare the coefficients in 
 (\ref{det-computation}) and (\ref{jacobi-pol-def}). This in combination with the binomial formulas
easily leads to the second  statement. 
\end{proof}

\begin{remark}
For $p=q$, i.e., $\alpha=\beta$, Eq.~(\ref{det-form-2}) can be written in a simpler way by using the 
$\>_2F_1$-representation (4.7.30) of \cite{S} of the Jacobi polynomials in this case.
In fact, a straightforward computation here implies the following:

If $N=2R$ is even, then 
$ \mathbb E\bigl(e_{n}^N(X_{t})\bigr)= e_{n}^N(z)=0$ for $k$ odd, and, for $n=0,\ldots,R$,
$$\mathbb E\bigl(e_{2n}^N(X_{t})\bigr)= e_{2n}(z)= (-1)^n \frac{R!\cdot n!}{(R-n)!} \cdot
 \frac{(2R+\alpha+1/2 -n)_n}{(1/2+R -n)_n}.$$
Moreover, for  $N=2R+1$ odd, we have $ \mathbb E\bigl(e_{k}^N(X_{t})\bigr)= e_{k}^N(z)=0$ for $k$ even,
 and, for $n=0,\ldots,R-1$,
$$\mathbb E\bigl(e_{2n+1}^N(X_{t})\bigr)= e_{2n+1}^N(z)= (-1)^n \frac{R!\cdot n!}{(R-n)!} \cdot
 \frac{(2R+\alpha+3/2 -n)_n}{(3/2+R -n)_n}.$$
\end{remark}

We now apply Theorem \ref{det-formula} for $t\to\infty$ in order to get a corresponding result for 
$\beta$-Jacobi ensembles:

\begin{corollary}\label{det-formula-uniform} 
Let $k_1,k_2,k_3\in\mathbb R$ with $k_3>0$, $k_2>-1/2$, and  $k_1+k_2>-1/2$.
 Let $X$ be an $A_N$-valued random variable with Lebesgue density
\begin{equation}\label{stationary-dist}
 c_{k_1,k_2,k}
 \prod_{i=1}^N \bigl((1-x_i)^{k_1+k_2-1/2}(1-x_i)^{k_2-1/2}\bigr)
 \prod_{1\le i<j\le N}|x_i-x_j|^{2k_3}.
\end{equation}
Then
\begin{equation}\label{det-form-stat}
\mathbb E\bigl(\prod_{i=1}^N (y- X_{i})\bigr) = \frac{1}{l_N^{(\alpha,\beta)}} \cdot P_N^{(\alpha,\beta)}(y)
  \quad\quad\text{for}\quad y\in \mathbb R
\end{equation}
with the Jacobi polynomial $ P_N^{(\alpha,\beta)}$ with
 leading coefficient $l_N^{(\alpha,\beta)}$ and with
$$\alpha=\frac{1+2k_1+2k_2}{2k_3}-1>-1, \quad \beta=\frac{1+2k_2}{2k_3}-1>-1.$$
Moreover, for $n=0,1,\ldots, N$,
\begin{align}\label{det-form-3}
& \mathbb E\bigl(e_{n}^N(X)\bigr)= \\
&=\frac{2^{N}}{\binom{2N+\alpha+\beta}{N}}
\sum_{l=N-n}^N  (-1)^{N-l} \binom{N}{l} \binom{l}{N-n} 
\frac{(N+\alpha+\beta+1)_l(\alpha+l+1)_{N-l}}{N!2^l}.\notag
\end{align}
\end{corollary}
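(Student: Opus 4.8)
The plan is to deduce Corollary \ref{det-formula-uniform} by letting $t\to\infty$ in Theorem \ref{det-formula} and identifying the limiting law with the density (\ref{stationary-dist}). First I would match the parameters: with $\kappa=k_3$ and $p,q$ as in (\ref{parameter-change-k-p}), the hypotheses $k_3>0$, $k_2>-1/2$, $k_1+k_2>-1/2$ are equivalent to $\kappa>0$ and $p,q>N-1$, and they give $\alpha=q-N>-1$ and $\beta=p-N>-1$, matching the values stated in the corollary. The density (\ref{stationary-dist}) is, up to normalization, the weight $w_k$ of (\ref{weight-general}) (reading its second factor as $(1+x_i)^{k_2-1/2}$). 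Since $L_k$ is self-adjoint on $L^2(A_N,w_k)$ with the Heckman--Opdam Jacobi polynomials $\widetilde R_\lambda$ as an orthogonal eigenbasis, and the eigenvalues $-\langle\lambda,\lambda+2\rho(k)\rangle$ vanish only for $\lambda=0$ and are strictly negative otherwise, the normalized $w_k$ is the unique stationary distribution and the transition semigroup approaches it with a spectral gap.

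Next I would run the process $(X_t)_{t\ge0}$ from the interior point $z\in A_N$ formed by the zeros of $P_N^{(\alpha,\beta)}$. By Corollary \ref{constant-solution-3} one has $\mathbb E(e_n^N(X_t))=e_n^N(z)$ for every $t\ge0$ and every $n=0,1,\ldots,N$. Using the spectral gap above, $X_t$ converges in distribution to the variable $X$ with density (\ref{stationary-dist}); the exponential rate is also visible directly from the expansion $\mathbb E(e_n^N(\tilde X_t))=\sum_{l=0}^n a_{n,l}e^{-r_lt}$ of Corollary \ref{constant-expectation-general}, whose $l\ge1$ terms decay. As each $e_n^N$ is bounded and continuous on the compact alcove $A_N\subset[-1,1]^N$, passing to the limit $t\to\infty$ in the constant $e_n^N(z)$ yields $\mathbb E(e_n^N(X))=e_n^N(z)$.

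The remaining steps are then immediate. Expanding the product gives $\mathbb E(\prod_{i=1}^N(y-X_i))=\sum_{n=0}^N(-1)^n\mathbb E(e_n^N(X))y^{N-n}=\sum_{n=0}^N(-1)^n e_n^N(z)y^{N-n}=\prod_{i=1}^N(y-z_i)=\frac{1}{l_N^{(\alpha,\beta)}}P_N^{(\alpha,\beta)}(y)$, since the $z_i$ are precisely the zeros of $P_N^{(\alpha,\beta)}$ and $l_N^{(\alpha,\beta)}$ is its leading coefficient; this is (\ref{det-form-stat}). Formula (\ref{det-form-3}) then coincides with (\ref{det-form-2}) of Theorem \ref{det-formula}, since both equal $e_n^N(z)$.

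The hard part will be justifying the passage $t\to\infty$ uniformly over the whole stated parameter region. For small $\kappa=k_3$ the constraint $p,q>N-1+1/\kappa$ required for the strong solution in Theorem~2.1 of \cite{Dem} may fail, so exponential ergodicity is not granted outright. I would close this gap by analytic continuation in $(\alpha,\beta)$, in the spirit of the proof of Proposition \ref{elementary-symm-martingale}: both sides of (\ref{det-form-3}) are analytic in $\alpha,\beta\in\,]-1,\infty[$, the left-hand side being the Selberg-type ratio $\int_{A_N}e_n^N\,w_k\big/\int_{A_N}w_k$, and they agree on the open subset of parameters where the process is genuinely ergodic, hence on the full region. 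The salient point is that the stationary expectation depends only on $\alpha,\beta$ and not on $\kappa=k_3$, although the stationary density itself does; this is precisely the $\kappa$-independence of Corollary \ref{constant-expectation-general} transported to $t=\infty$.
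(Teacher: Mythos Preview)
Your approach is essentially the paper's: match parameters via (\ref{parameter-change-k-p}) to get $p,q>N-1$ and the stated $\alpha,\beta$, run the process from $z$ and invoke Theorem \ref{det-formula}, then let $t\to\infty$ using ergodicity and compactness of $A_N$. The paper's proof is two lines, citing \cite{RR1} (Proposition 3.4 there) for the convergence $X_t\to X$ in distribution and then taking the limit.

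Your final paragraph on the parameter range is extra care the paper does not spell out. Your fix---for each fixed $\kappa=k_3$, both sides of (\ref{det-form-3}) are analytic in $(\alpha,\beta)$ and agree on the open set $\alpha,\beta>1/\kappa-1$ where the SDE has a strong solution, hence on all of $]-1,\infty[^2$---is correct. The paper's implicit alternative is that the Feller semigroup of \cite{RR1} exists for the full range of $k_1,k_2,k_3$, and the space-time harmonicity (\ref{diff-op-space-time}) was already extended to all parameters in the proof of Proposition \ref{elementary-symm-martingale}, so Dynkin's formula gives the constant expectation without the SDE constraint. Either route closes the gap.
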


\begin{proof} By the parameter transform (\ref{parameter-change-k-p}), we have $p,q>N-1$. Moreover, 
(\ref{parameter-change-k-p}) and (\ref{alpha-beta}) lead to the formula for $\alpha,\beta$ in the corollary.
We now consider the associated Jacobi processes $(X_t)_{t\ge0}$ as in  Theorem \ref{det-formula}. 
 It follows from   \cite{RR1} (see in particular Proposition 3.4 there) that the $X_t$ tend for $t\to\infty$
to $X$ in distribution. Therefore, as $A_N$ is compact, the corollary follows from  Theorem \ref{det-formula}. 
\end{proof}

\begin{example} Let $N=1$. Here $k_3$ is irrelevant, and we have from (\ref{jacobi-pol-def}) that
$$ \frac{1}{l_N^{(\alpha,\beta)}} \cdot P_1^{(\alpha,\beta)}(y)= y+\frac{\alpha-\beta}{\alpha+\beta+2}=
y+\frac{k_1}{k_1+2k_2+1}.$$
Eq.~(\ref{det-form-stat}) can be checked here by via classical beta-integrals.
\end{example}

Corollary \ref{det-formula-uniform} is equivalent to Aomoto's Selberg integral \cite{A}
 which involves additional elementary symmetric polynomials in classical Selberg integrals. 
These formulas admit even further generalizations like Kadell's Selberg integral \cite{Ka} where 
Jack polynomials $C_\lambda^{(1/k_3)}$ instead of
 elementary symmetric polynomials are used in  Selberg integrals.

\begin{remark}
If we put $k_1=0$ and use the tranform $x_i\mapsto x_i/\sqrt{k_2}$ ($i=1,\ldots,N$),
then Corollary \ref{det-formula-uniform} leads for $k_2\to\infty$ to 
a corresponding result for $\beta$-Hermite ensembles; see e.g. \cite{FG}.

Moreover, if we use the tranform $x_i\mapsto \frac{\alpha}{2}(x_i+1)$ ($i=1,\ldots,N$)
with $\alpha$ as in  Corollary \ref{det-formula-uniform}, then 
Corollary \ref{det-formula-uniform} leads for $k_1\to\infty$
 to a corresponding result for $\beta$-Laguerre ensembles; see e.g. \cite{FG}.

Corresponding limits are also possible on the level of the diffusions above,
 and one obtains the results in \cite{KVW} for
 multivariate Bessel processes associated with the root systems A and B.
For these  Bessel processes we refer to \cite{CDGRVY} and references there.
\end{remark}

\section{An algebraic explanation of some of the preceding results}

We finally discuss Proposition \ref{elementary-symm-martingale}  from an algebraic point of view.
For this we consider the  Heckman-Opdam Jacobi polynomials $ \widetilde R_{\lambda}$ from 
 Section 2.2 for partitions $\lambda=(\lambda_1,\ldots,\lambda_N)\in\mathbb Z_+$ with
 $\lambda_1\ge\ldots\ge\lambda_N$. It follows from  (\ref{rho-component}) that $ \widetilde R_{\lambda}$ 
is an eigenfunction of $ L_k$ with eigenvalue
\begin{equation}\label{eigen-jacobi} r(\lambda):= -\langle \lambda,\lambda+2\rho(k)\rangle=
-\sum_{i=1}^N \lambda_i( \lambda_i+k_1+2k_2+2k_3(N-i)).
\end{equation}
Moreover, the  $ \widetilde R_{\lambda}$ form an orthogonal basis of $L^2(A_N,w_k)$.

On the other hand,  Proposition \ref{elementary-symm-martingale} and its proof with the comments about 
space-time harmonic functions imply that for $n=1,\ldots,N$ the polynomials
\begin{equation}\label{pol-jac}
q_n(x):= e_n^N(x) + \sum_{l=1}^{n} c_{n,l} \cdot e_{n-l}^N(x)
\end{equation}
with the coefficients  $c_{n,l}$ from (\ref{mart-formel}) are  eigenfunctions of  $\tilde L_k$
and thus of
$ L_k$. In particular, $q_n$ is an eigenfunction of $ L_k$ with eigenvalue
$$-k_3n(p+q-n+1)=-n(1+k_1+2k_2+(2N-n-1)k_3).$$
This is equal to   $r(\lambda(n))$ in (\ref{eigen-jacobi}) for
the partition $\lambda(n):=(1,\ldots,1,0,\ldots,0)$ where 1 appears $n$-times.
Therefore,  the following result is quite natural.

\begin{lemma} Let $k_1,k_2,k_3\in\mathbb R$ with $k_3>0$, $k_2>-1/2$, and  $k_1+k_2>-1/2$. 
Then for each  $n=1,\ldots,N$, the polynomials  $ \widetilde R_{\lambda(n)}$ and $ q_n $ are equal
up to a  multiplicative constant. In particular,
 $ \widetilde R_{\lambda(n)}$ is independent of $k_3$.
\end{lemma}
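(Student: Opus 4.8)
The plan is to exhibit both $q_n$ and $\widetilde R_{\lambda(n)}$ as eigenfunctions of $L_k$ lying in the finite-dimensional space $V_n:=\mathrm{span}_{\mathbb R}\{e_0^N,\ldots,e_n^N\}$, sharing the single eigenvalue $r(\lambda(n))$, and then to show that this eigenvalue is \emph{simple} on $V_n$, so that the two polynomials are necessarily proportional. First I would record that $L_k$ preserves $V_n$ and acts triangularly on it. Setting $r=0$ in Lemma \ref{symmetric-pol-in-t} and using $L_k=\kappa\tilde L_k=k_3\tilde L_k$ gives, for each $m$,
$$ L_k e_m^N = r(\lambda(m))\,e_m^N + k_3(p-q)(N-m+1)\,e_{m-1}^N - k_3(N-m+2)(N-m+1)\,e_{m-2}^N, $$
where $r(\lambda(m))=-k_3\,m(p+q-m+1)$. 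Hence $L_kV_n\subseteq V_n$, and in the basis $(e_0^N,\ldots,e_n^N)$ the matrix of $L_k|_{V_n}$ is lower triangular with diagonal entries $r(\lambda(0)),\ldots,r(\lambda(n))$. Since $q_n=e_n^N+\sum_{l=1}^n c_{n,l}e_{n-l}^N\in V_n$ is an eigenfunction of $L_k$ by Proposition \ref{elementary-symm-martingale}, its eigenvalue must be the diagonal entry $r(\lambda(n))$.

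Next I would locate $\widetilde R_{\lambda(n)}$ inside $V_n$. Under $x_i=\cos t_i$ the orbit sum of $\lambda(n)=(1^n,0^{N-n})$ becomes $2^n e_n^N$, since $W$ is the group of signed permutations and summing $e^{i\langle\mu,t\rangle}$ over the orbit gives $\sum_{|S|=n}\prod_{i\in S}(e^{it_i}+e^{-it_i})=2^n e_n^N(x)$. The dominant weights $\nu<\lambda(n)$ are exactly the $\lambda(m)=(1^m,0^{N-m})$ with $0\le m<n$: the partial-sum constraints force every coordinate of $\nu$ into $\{0,1\}$, and each $\lambda(n)-\lambda(m)$ is a nonnegative integer combination of the positive roots $e_i$ and $e_i+e_j$. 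Consequently the transferred Heckman-Opdam polynomial $\widetilde R_{\lambda(n)}$ is a linear combination of $e_0^N,\ldots,e_n^N$ with nonzero $e_n^N$-coefficient, so $\widetilde R_{\lambda(n)}\in V_n$; and by (\ref{eigen-jacobi}) it is an eigenfunction of $L_k$ with the same eigenvalue $r(\lambda(n))$.

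It then remains to prove simplicity. For $0\le m<n$ a direct computation gives $r(\lambda(n))-r(\lambda(m))=-k_3(n-m)(p+q+1-n-m)$, and the hypotheses $k_2>-1/2$, $k_1+k_2>-1/2$ translate via (\ref{parameter-change-k-p}) into $p,q>N-1$, whence $p+q+1>2N-1\ge n+m$. Together with $k_3>0$ and $n>m$ this forces the difference to be nonzero, so the diagonal entries of $L_k|_{V_n}$ are pairwise distinct and each eigenvalue is simple. The $r(\lambda(n))$-eigenspace of $L_k|_{V_n}$ is therefore one-dimensional and contains both $q_n$ and $\widetilde R_{\lambda(n)}$, giving $\widetilde R_{\lambda(n)}=\mathrm{const}\cdot q_n$. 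Since the coefficients $c_{n,l}$ of $q_n$ depend only on $p,q$ and not on $\kappa=k_3$ by Proposition \ref{elementary-symm-martingale}, the normalized polynomial $\widetilde R_{\lambda(n)}$ is, at fixed $p,q$, independent of $k_3$.

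I expect the main obstacle to be precisely this simplicity step: it is here that the parameter restrictions are genuinely used, and one must rule out any collision among the relevant eigenvalues $r(\lambda(0)),\ldots,r(\lambda(n))$. The triangularity coming out of Lemma \ref{symmetric-pol-in-t}, and the identification of the orbit sum with $2^n e_n^N$ together with the determination of the dominant weights below $\lambda(n)$, are comparatively routine once the bookkeeping of the $BC_N$ root data is set up correctly.
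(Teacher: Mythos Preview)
Your argument is correct and takes a genuinely different route from the paper's. The paper argues via the $L^2$-orthogonal basis property of the $\widetilde R_\lambda$: if the eigenvalue $r(\lambda(n))$ happens to be simple in the full spectrum $\{r(\lambda):\lambda\in P_+\}$, proportionality of $q_n$ and $\widetilde R_{\lambda(n)}$ is immediate from the eigenfunction expansion; if not, a monotonicity argument shows that any colliding partition $\lambda$ must satisfy $\sum_i\lambda_i\neq n$, so a small perturbation of $k_1$ lifts the degeneracy by (\ref{eigen-jacobi}), and continuity in $k_1$ transports the proportionality back to the original parameters. Your approach sidesteps both the global $L^2$ structure and the perturbation/continuity step: you work entirely inside the finite-dimensional space $V_n=\mathrm{span}\{e_0^N,\ldots,e_n^N\}$, establish that $L_k$ acts lower-triangularly there (read off from Lemma~\ref{symmetric-pol-in-t}), place $\widetilde R_{\lambda(n)}$ in $V_n$ by explicitly identifying the dominant weights $\nu\le\lambda(n)$ as the $\lambda(m)$ with $m\le n$, and then check by the one-line computation $r(\lambda(n))-r(\lambda(m))=-k_3(n-m)(p+q+1-n-m)\neq 0$ that the diagonal entries are pairwise distinct under the hypotheses $p,q>N-1$. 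This is more elementary and fully self-contained; the paper's route, in exchange, needs less $BC_N$ combinatorics since it never has to determine which $\nu$ lie below $\lambda(n)$.
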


\begin{proof}
Let  $n=1,\ldots,N$. Then the statement is clear when the eigenvalue  $r(\lambda(n))$
has multiplicity 1. 

Assume now that  $r(\lambda(n))$ has multiplicity $\ge2$, i.e.,
 there exists a partition $\lambda\ne \lambda(n)$ with
\begin{equation}\label{equal-eigen}
\langle \lambda,\lambda+2\rho(k)\rangle=\langle \lambda(n),\lambda(n)+2\rho(k)\rangle.
\end{equation}
As $\rho(k)_1\ge\ldots\ge\rho(k)_N$, a simple monotonicity argument shows that then
 $\lambda$ satisfies $\sum_{i=1}^N \lambda_i\ne n$. On the other hand, if (\ref{equal-eigen}) holds for some
 $\lambda\ne \lambda(n)$ with $\sum_{i=1}^N \lambda_i\ne n$ and some $k_1,k_2,k_3$, then 
 (\ref{equal-eigen}) fails to hold for any slightly modified parameter $k_1$ by (\ref{eigen-jacobi}).
Therefore,  $ \widetilde R_{\lambda(n)}$ and $ q_n $ are equal
up to a  multiplicative constant for these modified $k_1$. An obvious continuity argument now shows that
this equality also holds for the original  $k_1$. This completes the proof.
\end{proof}

\end{document}